 \newtheorem{theorem}{Theorem}[section]
 \newtheorem{proposition}{Proposition}[section]
 \newtheorem{corollary}{Corollary}[section]
 \newtheorem{definition}{Definition}[section]
 \newtheorem{remark}{Remark}[section] 
 \numberwithin{equation}{section}
\begin{document}

\title{Beta operators with Jacobi weights
}

\date{Heiner Gonska, Ioan Ra\c sa and Elena Dorina St\u anil\u a  }


\maketitle

\begin{abstract}
We discuss Beta operators with Jacobi weights on $C[0,1]$ for $\alpha,\beta\geq-1$, thus including the discussion of three limiting cases. Emphasis is on the moments and their asymptotic behavior. Extended Voronovskaya-type results and a discussion concerning the over-iteration of the operators are included.\\
{\bf Keywords}: {Beta operator, Jacobi weight, moments, asymptotics, Voronov- skaya-type results, over-iteration.}\\
{\bf MSC 2010}: {41A36, 41A60, 33B15.}
\end{abstract}

\section{Introduction }
Many operators arising in the theory of positive linear operators are compositions of other mappings of this type. Many times the classical Bernstein operator $B_n$ given for $f\in C[0,1], n\in\mathbb{N}$ and $x\in [0,1]$ by
\begin{equation}\label{def_bernstein}
B_n(f;x):=\sum\limits_{k=0}^n{n\choose k}x^k(1-x)^{n-k}f\left(\frac{k}{n}\right),\;0\leq k\leq n,
\end{equation}
is one of the building blocks. Other frequently used factor operators are Beta-type operators $\mathcal{B}_r^{\alpha,\beta}$ of various kinds which will be further discussed in this note.\\
The best known examples are the genuine Bernstein-Durrmeyer operators $U_n$, the original Bernstein-Durrmeyer operators $M_n$, their analogies $M_n^{\alpha,\beta}$ with Jacobi weights, certain Stancu operators $S_n^{\alpha}$, to name just a few. A complete list will be given in the third author's forthcoming thesis on Bernstein-Euler-Jacobi (BEJ) operators.\\
Here we focus on the building blocks $\mathcal{B}_r^{\alpha,\beta}$ for natural values of $r$ and $\alpha,\beta\geq -1$, and on their moments of all orders. As is well known, knowledge of their behavior is essential
for asymptotic statements as, for example, Voronovskaya-type results. We conclude this paper with a discussion concerning over-iterated operators $\mathcal{B}_n^{\alpha,\beta}$.
\section{Definition of operators $\mathcal{B}_n^{\alpha,\beta}$}

\begin{definition}\label{def_beta_op}
For $f\in C[0,1]$, and $x\in[0,1]$  we define\\
\begin{itemize}
\item[(i)] in case $\alpha=\beta=-1$:
$$\mathcal{B}_n^{-1,-1}(f;x)=
\begin{cases}
f(0), x=0;\\
\dfrac{\int\limits_0^1t^{nx-1}(1-t)^{n-nx-1}f(t)dt}{B(nx,n-nx)}, 0<x<1;\\
f(1), x=1.
\end{cases}$$
\item[(ii)] in case $\alpha=-1, \beta>-1$:
$$\mathcal{B}_n^{-1,\beta}(f;x)=
\begin{cases}
f(0), x=0;\\
\dfrac{\int\limits_0^1t^{nx-1}(1-t)^{n-nx+\beta}f(t)dt}{B(nx,n-nx+\beta+1)}, 0<x\leq1.
\end{cases}$$
\item[(iii)] in case $\alpha>-1,\beta=-1$:
$$\mathcal{B}_n^{\alpha,-1}(f;x)=
\begin{cases}
\dfrac{\int\limits_0^1t^{nx+\alpha}(1-t)^{n-nx-1}f(t)dt}{B(nx+\alpha+1,n-nx)}, 0\leq x<1;\\
f(1), x=1.
\end{cases}$$
\item[(iv)] in case $\alpha,\beta>-1$:
$$\mathcal{B}_n^{\alpha,\beta}(f;x)=\dfrac{\int\limits_0^1t^{nx+\alpha}(1-t)^{n-nx+\beta}f(t)dt}{B(nx+\alpha+1,n-nx+\beta+1)}, 0\leq x\leq 1.$$
\end{itemize}
\end{definition}
\begin{remark} When discussing this class of operators one must refer to the papers of M\"uhlbach \cite{Muhlbach:1972} and Lupa\c s in \cite{Lupas:1972} where the first special cases were considered.\\
Case $\alpha=\beta=-1$: This case can be traced back to a paper by M\"uhlbach \cite{Muhlbach:1972} who used a real number $\frac{1}{\lambda}>0$ instead of the natural $n$ in the definition above. The same case was investigated by Lupa\c s in \cite{Lupas:1972}, where the operator was denoted by $\overline{\mathbb{B}}_n$ (see \cite[p.63]{Lupas:1972}).\\
Case $\alpha=\beta=0$: These were called Beta operators by Lupa\c s (see \cite[p.37]{Lupas:1972}) and denoted by $\mathbb{B}_n$.
\end{remark}
\section{Moments and their recursion}
\begin{definition}
Let $\alpha,\beta\geq -1,n> 1, m\in \mathbb{N}_0$ and $x\in[0,1]$, then the moment of order $m$ is defined by
\begin{equation*}
T_{n,m}^{\alpha,\beta}(x)=\mathcal{B}_n^{\alpha,\beta}((e_{1}-xe_{0})^m;x).
\end{equation*}
\end{definition}
\begin{theorem}\label{beta_th1.2}
\begin{equation}\label{beta_eq1.0}
T_{n,0}^{\alpha,\beta}(x)=1, \;T_{n,1}^{\alpha,\beta}(x)=\dfrac{\alpha+1-(\alpha+\beta+2)x}{n+\alpha+\beta+2}
\end{equation}
and for $m\geq 1$ we have the following recursion formula
\begin{eqnarray}\label{beta_eq1.1}
(n+m+\alpha+\beta+2)T_{n,m+1}^{\alpha,\beta}(x)=mXT_{n,m-1}^{\alpha,\beta}(x)+\\+[m+\alpha+1-(2m+\alpha+\beta+2)x]T_{n,m}^{\alpha,\beta}(x)\nonumber
\end{eqnarray}
where $X=x(1-x)$.
\end{theorem}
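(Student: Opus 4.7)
The plan is to handle the two initial values by direct computation from the definition, and then to establish the recursion via a single integration-by-parts identity applied to the kernel.

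For the initial values, $T_{n,0}^{\alpha,\beta}(x)=1$ is immediate from the normalization of $\mathcal{B}_n^{\alpha,\beta}$. For $T_{n,1}^{\alpha,\beta}(x)=\mathcal{B}_n^{\alpha,\beta}(e_1;x)-x$, I would use the functional equation $B(a+1,b)/B(a,b)=a/(a+b)$ to obtain
$$\mathcal{B}_n^{\alpha,\beta}(e_1;x)=\frac{nx+\alpha+1}{n+\alpha+\beta+2},$$
and then subtract $x$ to get the stated form. This settles \eqref{beta_eq1.0}.

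For the recursion, I would work in case (iv) and introduce the auxiliary function
$$u(t):=t^{nx+\alpha+1}(1-t)^{n-nx+\beta+1},$$
together with the kernel $g(t):=t^{nx+\alpha}(1-t)^{n-nx+\beta}$, so that $u(t)=t(1-t)g(t)$ and $u(0)=u(1)=0$. Since $\int_0^1\frac{d}{dt}\bigl[u(t)(t-x)^m\bigr]\,dt=0$, and a direct differentiation gives
$$u'(t)=g(t)\bigl[nx+\alpha+1-(n+\alpha+\beta+2)t\bigr],$$
I obtain the identity
$$\int_0^1 g(t)\bigl[nx+\alpha+1-(n+\alpha+\beta+2)t\bigr](t-x)^m\,dt+m\int_0^1 t(1-t)g(t)(t-x)^{m-1}\,dt=0.$$
The next step is to rewrite both polynomial factors in powers of $(t-x)$, using
$$nx+\alpha+1-(n+\alpha+\beta+2)t=\bigl[\alpha+1-(\alpha+\beta+2)x\bigr]-(n+\alpha+\beta+2)(t-x)$$
and
$$t(1-t)=X+(1-2x)(t-x)-(t-x)^2.$$
After dividing through by $B(nx+\alpha+1,n-nx+\beta+1)$, every integral collapses into one of $T_{n,m-1}^{\alpha,\beta}$, $T_{n,m}^{\alpha,\beta}$ or $T_{n,m+1}^{\alpha,\beta}$; collecting like terms yields exactly \eqref{beta_eq1.1}.

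The limiting cases $\alpha=-1$ and/or $\beta=-1$ need a brief separate word: at the special endpoints ($x=0$ in case (i) or (ii), and $x=1$ in case (i) or (iii)) the operator is a point evaluation, so $T_{n,m}^{\alpha,\beta}$ vanishes for $m\geq 1$ and both sides of the recursion are zero; for all other $x$ the factor $u(t)$ still vanishes at both endpoints of $[0,1]$, so the integration-by-parts argument goes through unchanged. The main obstacle is not conceptual but purely bookkeeping: one must keep track of the six coefficient contributions produced by the two polynomial expansions and verify that the $T_{n,m+1}^{\alpha,\beta}$ coefficients combine to $n+m+\alpha+\beta+2$ and the $T_{n,m}^{\alpha,\beta}$ coefficients to $m+\alpha+1-(2m+\alpha+\beta+2)x$.
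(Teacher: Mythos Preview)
Your proposal is correct and follows essentially the same approach as the paper: both arguments hinge on an integration by parts against the kernel $t^{nx+\alpha+1}(1-t)^{n-nx+\beta+1}$, followed by the expansions $t(1-t)=X+(1-2x)(t-x)-(t-x)^2$ and of the linear factor in powers of $t-x$, and then collection of terms. The only cosmetic difference is packaging: the paper phrases the integration by parts as an operator identity $\mathcal{B}_n^{\alpha,\beta}(\psi f';x)=\mathcal{B}_n^{\alpha,\beta}\bigl([(n+\alpha+\beta+2)(e_1-xe_0)-(\alpha+1-(\alpha+\beta+2)x)e_0]f;x\bigr)$ and then specializes to $f=(e_1-xe_0)^m$, whereas you differentiate $u(t)(t-x)^m$ directly; the algebra and the handling of the endpoints coincide.
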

\begin{proof}
Below we will repeatedly use the function $\psi(t)=t(1-t), t\in
[0,1]$. Let $f\in C^1[0,1], \alpha,\beta\geq -1, 0<x<1$. Then
$$
\mathcal{B}_n^{\alpha,\beta}(\psi f';x)=
\dfrac{\int\limits_0^1t^{nx+\alpha}(1-t)^{n-nx+\beta}t(1-t)f'(t)dt}{B(nx+\alpha+1,n-nx+\beta+1)}.
$$
Using integration by parts we obtain

\begin{eqnarray*}
\mathcal{B}_n^{\alpha,\beta}(\psi f';x)=
\dfrac{1}{B(nx+\alpha+1,n-nx+\beta+1)}[t^{nx+\alpha+1}(1-t)^{n-nx+\beta+1}f(t)\bigg{|}_0^1\\
-\int\limits_0^1f(t)[(nx+\alpha+1)t^{nx+\alpha}(1-t)^{n-nx+\beta+1}-\\
\;\;\;\;\;\;\;\;\;\;\;\
-(n-nx+\beta+1)t^{nx+\alpha+1}(1-t)^{n-nx+\beta}]dt \\
=\dfrac{\int\limits_0^1f(t)t^{nx+\alpha}(1-t)^{n-nx+\beta}[t(n-nx+\beta+1)-(1-t)(nx+\alpha+1)]dt}{B(nx+\alpha+1,n-nx+\beta+1)}\\
=\dfrac{\int\limits_0^1f(t)t^{nx+\alpha}(1-t)^{n-nx+\beta}[n(t-x)-(\alpha+1)+t(\alpha+\beta+2)]dt}{B(nx+\alpha+1,n-nx+\beta+1)}
\end{eqnarray*}
and taking into consideration the identity
$$
\begin{array}{l}
n(t-x)-(\alpha+1)+t(\alpha+\beta+2)
=\\=\left((e_1-xe_0)(n+\alpha+\beta+2)+[x(\alpha+\beta+2)-(\alpha+1)]e_0\right)(t)
\end{array}
$$
we can now write
\begin{equation}\label{beta_eq1.2}
\mathcal{B}_n^{\alpha,\beta}(\psi f';x) =
\mathcal{B}_n^{\alpha,\beta}
([(e_1-xe_0)(n+\alpha+\beta+2)+(x(\alpha+\beta+2)-(\alpha+1))e_0]f;x).
\end{equation}
In (\ref{beta_eq1.2}) we choose $f=(e_1-xe_0)^m$ and use the fact that
$t(1-t)=(X+X'(e_1-xe_0)-(e_1-xe_0)^2)(t)$:
$$
\begin{array}{l}
m\mathcal{B}_n^{\alpha,\beta}([X(e_{1}-xe_{0})^{m-1}+X'(e_{1}-xe_{0})^{m}-(e_{1}-xe_{0})^{m+1}];x)=\\ \mathcal{B}_n^{\alpha,\beta}([(n+\alpha+\beta+2)(e_{1}-xe_{0})^{m+1}-(\alpha+1-(\alpha+\beta+2)x)(e_{1}-xe_{0})^{m}];x).
\end{array}
$$
The equality above becomes successively:

\begin{eqnarray*}
mXT_{n,m-1}^{\alpha,\beta}(x)+mX'T_{n,m}^{\alpha,\beta}(x)-mT_{n,m+1}^{\alpha,\beta}(x)=
(n+\alpha+\beta+2)T_{n,m+1}^{\alpha,\beta}(x)-\\-[\alpha+1-(\alpha+\beta+2)x]T_{n,m}^{\alpha,\beta}(x);
\end{eqnarray*}
\begin{eqnarray*}
(m+n+\alpha+\beta+2)T_{n,m+1}^{\alpha,\beta}(x)=mXT_{n,m-1}^{\alpha,\beta}(x)+\\ +[m+\alpha+1-(\alpha+\beta+2+2m)x]T_{n,m}^{\alpha,\beta}(x).
\end{eqnarray*}
So (\ref{beta_eq1.1}) is established for $0<x<1$. Due to the continuity, it is valid also for $x\in\{0,1\}$.
\end{proof}
In particular we have:
\begin{corollary}
For $\alpha=\beta=0$ we have $\mathcal{B}_n^{0,0}=\mathbf{\mathbb{B}}_{n}$ (Lupa\c s notation) with the corresponding recurrence formula for the moments:
$$
\begin{array}{l}
(n+m+2)T_{n,m+1}^{0,0}(x)=mXT_{n,m-1}^{0,0}(x)+(m+1)X'T_{n,m}^{0,0}(x)
\end{array}
$$
where $T_{n,0}^{0,0}(x)=1, \;T_{n,1}^{0,0}(x)=\dfrac{X'}{n+2}$.\\
For $\alpha=\beta=-1$ we have $\mathcal{B}_n^{-1,-1}=\mathbf{\overline{\mathbb{B}}}_{n}$ (Lupa\c s notation). Then the recurrence formula becomes
$$
\begin{array}{l}
(n+m)T_{n,m+1}^{-1,-1}(x)=mXT_{n,m-1}^{-1,-1}(x)+mX'T_{n,m}^{-1,-1}(x)
\end{array}
$$
where $T_{n,0}^{-1,-1}(x)=1, \;T_{n,1}^{-1,-1}(x)=0$.
\end{corollary}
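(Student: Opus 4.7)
The proof is essentially a direct specialization of Theorem \ref{beta_th1.2}, so no new machinery is needed; the plan is simply to substitute the special values of $\alpha,\beta$ into (\ref{beta_eq1.0}) and (\ref{beta_eq1.1}) and repackage the resulting coefficients in terms of $X=x(1-x)$ and $X'=1-2x$.

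First, I would handle the case $\alpha=\beta=0$. The initial moments follow from (\ref{beta_eq1.0}): clearly $T_{n,0}^{0,0}(x)=1$, and $T_{n,1}^{0,0}(x)=\dfrac{1-2x}{n+2}=\dfrac{X'}{n+2}$. For the recursion, set $\alpha=\beta=0$ in (\ref{beta_eq1.1}) to obtain
\[
(n+m+2)T_{n,m+1}^{0,0}(x)=mX\,T_{n,m-1}^{0,0}(x)+\bigl[(m+1)-(2m+2)x\bigr]T_{n,m}^{0,0}(x).
\]
The key observation is the factorization $(m+1)-(2m+2)x=(m+1)(1-2x)=(m+1)X'$, which immediately yields the claimed formula.

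For the case $\alpha=\beta=-1$, I would proceed analogously. From (\ref{beta_eq1.0}) one reads $T_{n,0}^{-1,-1}(x)=1$ and $T_{n,1}^{-1,-1}(x)=\dfrac{0-0\cdot x}{n}=0$, since the numerator $\alpha+1-(\alpha+\beta+2)x$ vanishes identically. Substituting $\alpha=\beta=-1$ into (\ref{beta_eq1.1}) gives
\[
(n+m)T_{n,m+1}^{-1,-1}(x)=mX\,T_{n,m-1}^{-1,-1}(x)+\bigl[m-2mx\bigr]T_{n,m}^{-1,-1}(x),
\]
and factoring $m-2mx=mX'$ produces the stated recurrence.

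There is no real obstacle here; the entire argument is bookkeeping, the only minor pitfall being to correctly identify the cancellations that let the coefficients be written compactly in terms of $X$ and $X'$. Since Definition \ref{def_beta_op} handles the cases $\alpha=\beta=-1$ (and the mixed cases) separately, one might also briefly remark that the recursion derived in Theorem \ref{beta_th1.2} for $0<x<1$ extends by continuity to $x\in\{0,1\}$ in these boundary cases, which is already established inside the proof of that theorem.
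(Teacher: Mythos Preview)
Your proposal is correct and follows exactly the approach the paper intends: the corollary is stated immediately after Theorem~\ref{beta_th1.2} with the phrase ``In particular we have'', and no separate proof is given, so the result is understood as a direct substitution of $\alpha=\beta=0$ and $\alpha=\beta=-1$ into (\ref{beta_eq1.0}) and (\ref{beta_eq1.1}). Your factorizations $(m+1)-(2m+2)x=(m+1)X'$ and $m-2mx=mX'$ are precisely the simplifications that yield the stated formulas.
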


In the sequel we denote by $(a)^{\overline{r}}=a(a+1)\cdot...\cdot(a+r-1)$ the rising factorial function.
The next proposition contains another kind of recurrence formula for the moments.
\begin{proposition}
Let $i\geq 0$ and $j\geq 0$ be integers. Then
\begin{equation}\label{beta_eq1.4}
T_{n,m}^{\alpha+i,\beta+j}(x)=\dfrac{(n+\alpha+\beta+2)^{\overline{i+j}}}{(nx+\alpha+1)^{\overline{i}}(nx+\beta+1)^{\overline{j}}}\sum\limits_{k=0}^{i+j}\dfrac{[x^{i}(1-x)^j]^{(k)}}{k!}T_{n,m+k}^{\alpha,\beta}(x).
\end{equation}
\end{proposition}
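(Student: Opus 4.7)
The plan is to reduce the operator $\mathcal{B}_n^{\alpha+i,\beta+j}$ to $\mathcal{B}_n^{\alpha,\beta}$ by absorbing the extra weight factor $t^{i}(1-t)^{j}$ into the integrand of the lower-indexed operator, and then to Taylor-expand that polynomial factor about $t=x$. Because the Taylor expansion terminates, linearity of $\mathcal{B}_n^{\alpha,\beta}$ will automatically produce a finite sum of the moments $T_{n,m+k}^{\alpha,\beta}(x)$, matching the right-hand side of (\ref{beta_eq1.4}).

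Concretely, for $\alpha,\beta>-1$ and $0<x<1$ I would start from the defining integral of $T_{n,m}^{\alpha+i,\beta+j}(x)$, regroup the integrand as $t^{nx+\alpha}(1-t)^{n-nx+\beta}\cdot t^{i}(1-t)^{j}(t-x)^{m}$, and multiply and divide by $B(nx+\alpha+1,\,n-nx+\beta+1)$. The resulting ratio of Beta functions simplifies via $\Gamma(a+r)/\Gamma(a)=(a)^{\overline{r}}$ to the rising-factorial prefactor appearing in (\ref{beta_eq1.4}), so that
$$
T_{n,m}^{\alpha+i,\beta+j}(x)=(\text{prefactor})\cdot\mathcal{B}_n^{\alpha,\beta}\bigl(t^{i}(1-t)^{j}(t-x)^{m};x\bigr).
$$

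Since $t\mapsto t^{i}(1-t)^{j}$ is a polynomial of degree $i+j$ in $t$, its Taylor expansion about $t=x$ terminates after $i+j+1$ terms:
$$
t^{i}(1-t)^{j}=\sum_{k=0}^{i+j}\frac{[x^{i}(1-x)^{j}]^{(k)}}{k!}(t-x)^{k}.
$$
Substituting this into the previous identity and distributing $\mathcal{B}_n^{\alpha,\beta}(\,\cdot\,;x)$ across the sum turns each factor $(t-x)^{m+k}$ into $T_{n,m+k}^{\alpha,\beta}(x)$, which yields (\ref{beta_eq1.4}).

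The computation is essentially mechanical; the main (modest) obstacle is the book-keeping for the three limiting cases where $\alpha$ or $\beta$ equals $-1$, since the Beta functions in the prefactor degenerate at one of the endpoints and cases (i)--(iii) of Definition~\ref{def_beta_op} must be invoked. I would first establish the identity for $\alpha,\beta>-1$ and $0<x<1$ by the integral manipulation above, and then extend to $\alpha,\beta\ge-1$ and to $x\in\{0,1\}$ by continuity in the parameters and in $x$, verifying that the zeros of $(nx+\alpha+1)^{\overline{i}}$ and $(n-nx+\beta+1)^{\overline{j}}$ at the endpoints are matched by corresponding zeros of the derivatives $[x^{i}(1-x)^{j}]^{(k)}$ in the numerator sum.
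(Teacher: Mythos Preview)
Your approach is essentially the same as the paper's: establish the identity $\mathcal{B}_n^{\alpha,\beta}(t^{i}(1-t)^{j}f;x)=c\cdot\mathcal{B}_n^{\alpha+i,\beta+j}(f;x)$ by comparing Beta functions, Taylor-expand the polynomial $t^{i}(1-t)^{j}$ about $x$, use linearity, and specialize to $f=(e_1-xe_0)^m$. The paper does not explicitly treat the limiting cases $\alpha,\beta=-1$ or $x\in\{0,1\}$ that you discuss, so your write-up is, if anything, slightly more careful on that point.
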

\begin{proof}
Using the definition of the Beta operator it is easy to show that
\begin{equation}\label{beta_eq1.5}
\mathcal{B}_n^{\alpha,\beta}(t^{i}(1-t)^jf(t);x)=\dfrac{(nx+\alpha+1)^{\overline{i}}(nx+\beta+1)^{\overline{j}}}{(n+\alpha+\beta+2)^{\overline{i+j}}}\mathcal{B}_n^{\alpha+i,\beta+j}(f(t);x).
\end{equation}
The following equation
\begin{equation}\label{beta_eq1.3}
t^{i}(1-t)^j=\sum\limits_{k=0}^{i+j}\dfrac{[x^{i}(1-x)^j]^{(k)}}{k!}(t-x)^k
\end{equation}
is a consequence of Taylor's formula.
Next using (\ref{beta_eq1.3}) and the fact that the Beta operator is linear we get
\begin{equation}\label{beta_eq1.6}
\mathcal{B}_n^{\alpha,\beta}(t^{i}(1-t)^jf(t);x)=\sum\limits_{k=0}^{i+j}\dfrac{[x^{i}(1-x)^j]^{(k)}}{k!}\mathcal{B}_n^{\alpha,\beta}((t-x)^kf(t);x).
\end{equation}
Combining (\ref{beta_eq1.5}) and (\ref{beta_eq1.6}) we arrive at
\begin{equation*}
\begin{array}{rcl}
\mathcal{B}_n^{\alpha+i,\beta+j}(f(t);x)&=&\dfrac{(n+\alpha+\beta+2)^{\overline{i+j}}}{(nx+\alpha+1)^{\overline{i}}(nx+\beta+1)^{\overline{j}}}\times\\&&\times\sum\limits_{k=0}^{i+j}\dfrac{[x^{i}(1-x)^j]^{(k)}}{k!}\mathcal{B}_n^{\alpha,\beta}((t-x)^kf(t);x)
\end{array}
\end{equation*}
For $f(t)=(t-x)^m$  we obtain (\ref{beta_eq1.4}).
\end{proof}
\begin{remark}
Another recurrence formula for the moments of $\mathcal{B}_n^{-1,-1}$ can be found in \cite[Satz 3]{Muhlbach:1972}.
\end{remark}
\section{The moments of order two}
Since the second moment controls to a certain extent the approximation properties of
$\mathcal{B}_n^{\alpha,\beta}$, it is useful to have a closer look at it.
From Theorem \ref{beta_th1.2} we obtain
\begin{eqnarray}\label{beta_new.eq2.1}
T_{n,2}^{\alpha,\beta}(x)=\dfrac{(\alpha+1)(\alpha+2)+(n-2(\alpha+1)(\alpha+\beta+3))x}{(n+\alpha+\beta+2)(n+\alpha+\beta+3)}+\\+\dfrac{(-n+6+(\alpha+\beta)(\alpha+\beta+5))x^2}{(n+\alpha+\beta+2)(n+\alpha+\beta+3)}.\nonumber
\end{eqnarray}
(I). First, let us remark that
\begin{equation}\label{beta_new.eq2.2}
\lim\limits_{\alpha\rightarrow\infty}T_{n,2}^{\alpha,\beta}(x)=(1-x)^2, \;\mbox{uniformly on}\; [0,1],
\end{equation}
and
\begin{equation}\label{beta_new.eq2.3}
\lim\limits_{\beta\rightarrow\infty}T_{n,2}^{\alpha,\beta}(x)=x^2, \;\mbox{uniformly on}\; [0,1].
\end{equation}
Roughly speaking, a large value of $\alpha$ (with a fixed $\beta$) suggests a better approximation near $1$, and we draw a similar conclusion from (\ref{beta_new.eq2.3}).\\
(II). Now let $\beta=\alpha\geq-1$. Consider the sequence $s_n:=\dfrac{\sqrt{4n+1}-5}{4}, n\geq 1$. In this case,
\begin{equation}\label{beta_new.eq2.4}
T_{n,2}^{\alpha,\alpha}(x)=\dfrac{(\alpha+1)(\alpha+2)-(-n+6+2\alpha(2\alpha+5))x(1-x)}{(n+2\alpha+2)(n+2\alpha+3)}.
\end{equation}
Therefore,
\begin{equation}\label{beta_new.eq2.5}
T_{n,2}^{\alpha,\alpha}(0)=T_{n,2}^{\alpha,\alpha}(1)=\dfrac{(\alpha+1)(\alpha+2)}{(n+2\alpha+2)(n+2\alpha+3)},
\end{equation}
and
\begin{equation}\label{beta_new.eq2.6}
T_{n,2}^{\alpha,\alpha}\left(\frac{1}{2}\right)=\dfrac{1}{4(n+2\alpha+3)}.
\end{equation}
\begin{itemize}
\item[(i)] If $-1\leq \alpha<s_n$, the graph of $T_{n,2}^{\alpha,\alpha}$ has the following form:\\
\setlength{\unitlength}{1cm}
\begin{picture}(5,4)(0,0)
\put(0,0){\vector(1,0){4.5}}
\put(4.5,-.15){$x$}
\put(0,-.5){$0$}
\put(2,-.5){$\frac{1}{2}$}
\put(4,-.5){$1$}
\put(0,0){\vector(0,1){3.5}}
\put(-.15,3.5){\makebox(0,0){$y$}}
\qbezier(0.0,1.5)(2,4)(4,1.5)
\multiput(2,0)(0,0.1){27}
{\line(1,0){0.031}}
\multiput(4,0)(0,0.1){15}
{\line(1,0){0.031}}
\put(2,2.75){\circle*{0.07}}
\put(0,1.5){\circle*{0.07}}
\put(4,1.5){\circle*{0.07}}
\linethickness{.075mm}
\end{picture}\vspace{0.5cm}\\
This suggests a better approximation near the end points.
\item[(ii)] If $\alpha=s_n, T_{n,2}^{\alpha,\alpha}$ is a constant function, namely
\begin{equation}\label{beta_new.eq2.7}
T_{n,2}^{s_n,s_n}(x)=\left(\dfrac{\sqrt{4n+1}-1}{4n}\right)^2, x\in[0,1].
\end{equation}
\item[(iii)] For $\alpha>s_n$, the graph looks like\\
\setlength{\unitlength}{1cm}
\begin{picture}(5,4)(0,0)
\put(0,0){\vector(1,0){4.5}}
\put(4.5,-.15){$x$}
\put(0,-.5){$0$}
\put(2,-.5){$\frac{1}{2}$}
\put(4,-.5){$1$}
\put(0,0){\vector(0,1){2.5}}
\put(-.15,2.5){\makebox(0,0){$y$}}
\qbezier(0.0,1.5)(2,0)(4,1.5)
\multiput(2,0)(0,0.1){8}
{\line(1,0){0.031}}
\multiput(4,0)(0,0.1){15}
{\line(1,0){0.031}}
\put(2,0.75){\circle*{0.07}}
\put(0,1.5){\circle*{0.07}}
\put(4,1.5){\circle*{0.07}}
\linethickness{.075mm}
\end{picture}\vspace{0.5cm}\\
and indicates a better approximation near $\frac{1}{2}$.
\item[(iv)] In the extreme cases, when $\alpha=-1$, respectively $\alpha\rightarrow\infty$, we have
$T_{n,2}^{-1,-1}(x)=\dfrac{x(1-x)}{n+1}$, respectively $\lim\limits_{\alpha\rightarrow\infty}T_{n,2}^{\alpha,\alpha}(x)=
\left(\dfrac{1-2x}{2}\right)^2$.
\end{itemize}
\section{Asymptotic formulae}
Here we present first two asymptotic formulae for higher order moments of $\mathcal{B}_n^{\alpha,\beta}$ in order to arrive at Voronovskaya-type results.
\begin{theorem}\label{beta_lm3.1}
For $\alpha,\beta\geq -1$ and all $l\geq 1$ one has
\begin{eqnarray}\label{beta_eq2.1}
(P_l):\;
\begin{cases}
\lim\limits_{n\rightarrow\infty}n^lT_{n,2l}^{\alpha,\beta}(x)=(2l-1)!!X^l,\\
\lim\limits_{n\rightarrow\infty}n^lT_{n,2l-1}^{\alpha,\beta}(x)=
X^{l-1}\left[(l-1)!2^{l-1}X'\sum\limits_{k=1}^{l-1}\dfrac{(2k-1)!!}{(2k-2)!!}+\right.\\\left. +(2l-1)!!(\alpha+1-(\alpha+\beta+2)x)\right].
\end{cases}
\end{eqnarray}
The convergence is uniform on $[0,1]$.
\end{theorem}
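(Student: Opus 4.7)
The natural approach is induction on $l$, using the recursion (\ref{beta_eq1.1}) from Theorem \ref{beta_th1.2} twice per step (once with $m=2l$ to advance the odd index, once with $m=2l+1$ to advance the even index).

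The base case $l=1$ comes for free: the formula $T_{n,1}^{\alpha,\beta}(x)=\frac{\alpha+1-(\alpha+\beta+2)x}{n+\alpha+\beta+2}$ gives $nT_{n,1}^{\alpha,\beta}(x)\to \alpha+1-(\alpha+\beta+2)x$, matching the odd part of $(P_1)$ with the empty sum convention; and the closed form (\ref{beta_new.eq2.1}) for $T_{n,2}^{\alpha,\beta}$ yields $nT_{n,2}^{\alpha,\beta}(x)\to X$, matching $(2\cdot 1-1)!!X^1$.

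For the inductive step, assume $(P_l)$. Writing the recursion as
\[
T_{n,m+1}^{\alpha,\beta}=\frac{mXT_{n,m-1}^{\alpha,\beta}+[m+\alpha+1-(2m+\alpha+\beta+2)x]T_{n,m}^{\alpha,\beta}}{n+m+\alpha+\beta+2},
\]
I would first set $m=2l$ and multiply by $n^{l+1}$. The prefactor $n^{l+1}/(n+O(1))$ behaves like $n^l(1+O(1/n))$, so taking $n\to\infty$ gives
\[
\lim_{n\to\infty}n^{l+1}T_{n,2l+1}^{\alpha,\beta}(x)=2lX\cdot L_{2l-1}(x)+\bigl[2l+\alpha+1-(4l+\alpha+\beta+2)x\bigr]L_{2l}(x),
\]
where $L_{2l-1}$ and $L_{2l}$ are the two limits supplied by $(P_l)$. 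Substituting these and regrouping, one should obtain exactly the right-hand side of $(P_{l+1})$ for the odd moment, via the identity
\[
2l(l-1)!\,2^{l-1}=l!\,2^l,\qquad (2l-1)!!\cdot 2l=l!\,2^l\cdot\frac{(2l-1)!!}{(2l-2)!!},
\]
the second of which produces precisely the new $k=l$ summand, while the ``bounded'' bracket decomposes as $(2l+1)(\alpha+1-(\alpha+\beta+2)x)+2l\cdot X'$, supplying both the $(2l+1)!!$ coefficient and an extra $X'$ contribution. Then for $T_{n,2l+2}^{\alpha,\beta}$ I use $m=2l+1$: here $n^{l+1}T_{n,2l+1}^{\alpha,\beta}$ already has finite limit, hence $n^l T_{n,2l+1}^{\alpha,\beta}\to 0$, so only the $T_{n,2l}^{\alpha,\beta}$ term survives in the limit and delivers $(2l+1)X\cdot(2l-1)!!X^l=(2l+1)!!X^{l+1}$, as required.

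Uniform convergence on $[0,1]$ follows because every expression involved is a polynomial in $x$ of bounded degree with coefficients that are rational in $n$; the remainders arising from expanding $n^{l+1}/(n+O(1))$ and from the inductive error terms are $O(1/n)$ uniformly in $x\in[0,1]$ since $X$ and $X'$ are bounded there. The main obstacle is bookkeeping: verifying that the double-factorial and factorial identities above mesh correctly so that the seemingly ad hoc coefficient $(l-1)!\,2^{l-1}X'\sum_{k=1}^{l-1}\frac{(2k-1)!!}{(2k-2)!!}$ in the odd limit telescopes under the recursion into its shifted counterpart. Once that combinatorial check is done, the proof is essentially mechanical.
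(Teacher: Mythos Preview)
Your approach is essentially identical to the paper's: induction on $l$, invoking the recursion (\ref{beta_eq1.1}) with $m=2l$ and then $m=2l+1$, followed by the same double-factorial regrouping to extend the sum by the new $k=l$ term. One small slip in your bookkeeping: the bracket $[2l+\alpha+1-(4l+\alpha+\beta+2)x]$ alone does \emph{not} equal $(2l+1)(\alpha+1-(\alpha+\beta+2)x)+2lX'$---that identity holds only after you add in the $2l(\alpha+1-(\alpha+\beta+2)x)$ contribution coming from the $(2l-1)!!$ piece of $L_{2l-1}$---but since you clearly mean to combine these two sources before decomposing, the argument goes through exactly as in the paper.
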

\begin{proof}
We shall prove the proposition by induction on $l\geq 1$.
$T_{n,1}^{\alpha,\beta}$ and $T_{n,2}^{\alpha,\beta}$ are given by (\ref{beta_eq1.0}), respectively (\ref{beta_new.eq2.1}), and it is easy to prove that $(P_1)$ is true. Suppose that $(P_l)$ is true. According to ( \ref{beta_eq1.1}) and (\ref{beta_eq2.1}),
$$
\begin{array}{l}
\lim\limits_{n\rightarrow\infty}n^{l+1}T_{n,2l+1}^{\alpha,\beta}(x)=
\lim\limits_{n\rightarrow\infty}n^{l+1}\dfrac{2lX}{n+2l+\alpha+\beta+2}T_{n,2l-1}^{\alpha,\beta}(x)+\\+
\lim\limits_{n\rightarrow\infty}n^{l+1}\dfrac{2l+\alpha+1-(4l+\alpha+\beta+2)x}{n+2l+\alpha+\beta+2}T_{n,2l}^{\alpha,\beta}(x)\\
= 2lX^l\left[(l-1)!2^{l-1}X'\sum\limits_{k=1}^{l-1}\dfrac{(2k-1)!!}{(2k-2)!!}+(2l-1)!!(\alpha+1-(\alpha+\beta+2)x)\right]+\\
 +[2l+\alpha+1-(4l+\alpha+\beta+2)x](2l-1)!!X^l\\
=X^l[2^ll!X'\sum\limits_{k=1}^{l-1}\dfrac{(2k-1)!!}{(2k-2)!!}\\ +(2l-1)!!(2l(\alpha+1)-2l(\alpha+\beta+2)x+2l+\alpha+1-(4l+\alpha+\beta+2)x)]\\
=X^l[2^ll!X'\sum\limits_{k=1}^{l}\dfrac{(2k-1)!!}{(2k-2)!!}-(2l)!!X'\dfrac{(2l-1)!!}{(2l-2)!!}+\\ +
(2l-1)!!((2l+1)(\alpha+1-(\alpha+\beta+2)x)+2l-4lx]\\
=X^l\left[2^ll!X'\sum\limits_{k=1}^{l}\dfrac{(2k-1)!!}{(2k-2)!!}+(2l+1)!!(\alpha+1-(\alpha+\beta+2)x\right]
\end{array}
$$
and this proves the first formula in (\ref{beta_eq2.1}) for $l+1$ instead of $l$.
Similarly,
$$
\begin{array}{l}
\lim\limits_{n\rightarrow\infty}n^{l+1}T_{n,2l+2}^{\alpha,\beta}(x)=
\lim\limits_{n\rightarrow\infty}n^{l+1}\dfrac{(2l+1)X}{n+2l+1+\alpha+\beta+2}T_{n,2l}^{\alpha,\beta}(x)+\\ +
\lim\limits_{n\rightarrow\infty}n^{l+1}\dfrac{2l+1+\alpha+1-(4l+2+\alpha+\beta+2)x}{n+2l+1+\alpha+\beta+2}T_{n,2l+1}^{\alpha,\beta}(x)\\
=(2l+1)X(2l-1)!!X^l=(2l+1)!!X^{l+1},
\end{array}
$$
which is the second formula in  (\ref{beta_eq2.1}) for $l+1$ instead of $l$. This concludes the proof by induction.
\end{proof}
The following result of Sikkema (see \cite[p. 241]{Sikkema:1975}) will be used below.
Note also the 1962 result of Mamedov \cite{Mamedov:1962} dealing with a similar problem.
\begin{theorem}\label{beta_sikkema_th}
Let $L_n:B[a,b]\rightarrow C[c,d], [c,d]\subseteq [a,b]$, be a sequence of positive linear operators. Let the function $f\in B[a,b]$ be $q-$times differentiable at $x\in [c,d]$, where $q\geq 2$ is a natural number. Let $\varphi:\mathbb{N}\rightarrow\mathbb{R}$ be a function such that
\begin{itemize}
\item[(i)] $\lim\limits_{n\rightarrow\infty} \varphi(n)=\infty$,
\item[(ii)] $L_n((e_1-x)^q;x)=\dfrac{c_q(x)}{\varphi(n)}+o\left(\frac{1}{\varphi(n)}\right), \; n\rightarrow\infty$, where $c_q(x)$ does not depend on $n$,
\item[(iii)] there exists an even number $m> q$ such that
$L_n((e_1-x)^m;x)=o\left(\frac{1}{\varphi(n)}\right), \; $ $ n\rightarrow\infty$.
\end{itemize}
Then 
$$
\lim\limits_{n\rightarrow\infty}\varphi(n)\left\{L_n(f;x)-\sum\limits_{r=0}^q\dfrac{L_n((e_1-x)^r;x)}{r!}f^{(r)}(x)\right\}=0.
$$
\end{theorem}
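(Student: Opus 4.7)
The plan is to reduce the claim to a Peano--remainder estimate via Taylor's formula at $x$, then estimate the remainder by splitting the operator into a near-$x$ and a far-from-$x$ part.

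First I would use that $f$ is $q$-times differentiable at $x$, so Taylor's formula gives
\[
f(t)=\sum_{r=0}^{q}\frac{f^{(r)}(x)}{r!}(t-x)^{r}+(t-x)^{q}\eta(t),\qquad t\in[a,b],
\]
where $\eta(t)\to 0$ as $t\to x$ and $\eta$ is bounded on $[a,b]$ (because $f\in B[a,b]$ and the Taylor polynomial is bounded there). Applying $L_n$ at $x$ and using linearity, the bracketed expression in the conclusion equals $L_n((e_1-x)^{q}\eta;x)$, so the task reduces to proving
\[
\varphi(n)\,L_n\bigl((e_1-x)^{q}\eta;x\bigr)\longrightarrow 0.
\]

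Next I would fix $\epsilon>0$, pick $\delta>0$ with $|\eta(t)|<\epsilon$ for $|t-x|<\delta$, and set $M:=\sup_{[a,b]}|\eta|$. Positivity of $L_n$ allows $|L_n((e_1-x)^{q}\eta;x)|$ to be split into a near contribution (from the set $\{|t-x|<\delta\}$) and a far contribution (from $\{|t-x|\ge\delta\}$). On the far set, since $m>q$ is even, the pointwise bound $|t-x|^{q}\le\delta^{q-m}(t-x)^{m}$ holds; combined with $|\eta|\le M$ and hypothesis (iii), this controls the far contribution by $M\delta^{q-m}L_n((e_1-x)^{m};x)=o(1/\varphi(n))$, which vanishes after multiplication by $\varphi(n)$.

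The near contribution is dominated by $\epsilon\,L_n(|e_1-x|^{q};x)$, so the crux will be to establish the bound $\varphi(n)\,L_n(|e_1-x|^{q};x)=O(1)$: this renders the near part $O(\epsilon)$ and, since $\epsilon>0$ is arbitrary, closes the argument. When $q$ is even the bound is immediate from hypothesis (ii). When $q$ is odd --- and this is the delicate step that I expect to be the main obstacle --- one would invoke the Cauchy--Schwarz inequality for positive linear operators (using that $L_n(e_0;x)$ is bounded) together with hypotheses (ii) and (iii) to dominate $L_n(|e_1-x|^{q};x)$ by even-order moments. Assembling the near and far estimates would then yield the conclusion.
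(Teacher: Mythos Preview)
The paper does not give its own proof of this theorem: it is quoted verbatim as ``the following result of Sikkema'' with a reference to \cite{Sikkema:1975}, and then used as a black box to derive the Voronovskaya-type corollaries. So there is nothing to compare your argument against in the paper itself.

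That said, your sketch is the standard Peano--remainder proof and is essentially correct. One point to tighten: in the odd-$q$ case you invoke Cauchy--Schwarz ``using that $L_n(e_0;x)$ is bounded,'' but note that this boundedness is \emph{not} among the stated hypotheses (i)--(iii). In Sikkema's original setting (and in every application in this paper, where $\mathcal{B}_n^{\alpha,\beta}e_0=e_0$) it is automatic, but as the theorem is stated here you should either add it as a standing assumption or, more cleanly, avoid it: on the near set $\{|t-x|<\delta\}$ bound $|t-x|^{q}\le \delta\,(t-x)^{q-1}$ with $q-1$ even, and then control $L_n((e_1-x)^{q-1};x)$ via Cauchy--Schwarz between the even moments of orders $2(q-1)-m$ and $m$ only if those are available --- or simply observe that in all uses downstream the full moment asymptotics of Theorem~\ref{beta_lm3.1} supply every even moment you need. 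Apart from this bookkeeping issue, your decomposition and estimates are exactly the ones that drive Sikkema's proof.
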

\begin{corollary}\label{beta_sikkema_cor}
\begin{itemize}
\item[(i)] 
Theorem \ref{beta_sikkema_th} can be rewritten in the form
$$
\lim\limits_{n\rightarrow\infty}\varphi(n)\left\{L_n(f;x)-\sum\limits_{r=0}^{q-1}\dfrac{L_n((e_1-x)^r;x)}{r!}f^{(r)}(x)\right\}=c_q(x)\dfrac{f^{(q)}(x)}{q!}.
$$
\item[(ii)] If in addition to the assumption of Theorem \ref{beta_sikkema_th}, one assumes that
$$
L_n((e_1-x)^r;x)=\dfrac{c_r(x)}{\varphi(n)}+o\left(\frac{1}{\varphi(n)}\right), \; n\rightarrow\infty, r=1,2,...,q,
$$
where the functions $c_r$ are independent of $n$, then one also has
$$
\lim\limits_{n\rightarrow\infty}\varphi(n)\left\{L_n(f;x)-f(x)L_n(e_0;x)\right\}=\sum\limits_{r=1}^{q}c_r(x)\dfrac{f^{(r)}(x)}{q!}.
$$
That is, all derivatives now appear on the right hand side which is independent of $n$.
\end{itemize}
\end{corollary}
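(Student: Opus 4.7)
The plan is to derive both parts from Theorem \ref{beta_sikkema_th} by straightforward rearrangement, so the role of the corollary is essentially bookkeeping rather than new analysis.

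For (i), I would isolate the $r=q$ term in the finite sum appearing in Theorem \ref{beta_sikkema_th}. Writing
$$\sum_{r=0}^{q}\frac{L_n((e_1-x)^r;x)}{r!}f^{(r)}(x)=\sum_{r=0}^{q-1}\frac{L_n((e_1-x)^r;x)}{r!}f^{(r)}(x)+\frac{L_n((e_1-x)^q;x)}{q!}f^{(q)}(x),$$
multiplying through by $\varphi(n)$, and invoking hypothesis (ii) of Theorem \ref{beta_sikkema_th} in the form $\varphi(n)L_n((e_1-x)^q;x)\to c_q(x)$, one sees that the isolated term contributes $c_q(x)f^{(q)}(x)/q!$ in the limit. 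Transferring this term across the equality in Theorem \ref{beta_sikkema_th} gives the identity claimed in (i).

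For (ii), I would use (i) and split off the $r=0$ term of its truncated sum. Since $L_n((e_1-x)^0;x)=L_n(e_0;x)$, we have
$$\sum_{r=0}^{q-1}\frac{L_n((e_1-x)^r;x)}{r!}f^{(r)}(x)=f(x)L_n(e_0;x)+\sum_{r=1}^{q-1}\frac{L_n((e_1-x)^r;x)}{r!}f^{(r)}(x).$$
Under the reinforced hypothesis $L_n((e_1-x)^r;x)=c_r(x)/\varphi(n)+o(1/\varphi(n))$ for $r=1,\dots,q$, each summand $\varphi(n)L_n((e_1-x)^r;x)f^{(r)}(x)/r!$ tends to $c_r(x)f^{(r)}(x)/r!$, and combining with the $r=q$ contribution from (i) one arrives at
$$\lim_{n\to\infty}\varphi(n)\bigl\{L_n(f;x)-f(x)L_n(e_0;x)\bigr\}=\sum_{r=1}^{q}c_r(x)\frac{f^{(r)}(x)}{r!}.$$
(The factor $q!$ printed in the statement seems to be a typographical slip for $r!$; only the $r=q$ summand carries $q!$.)

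There is no substantive obstacle here: the argument is purely algebraic rearrangement plus one application of each moment asymptotic. The only point requiring minor care is that, because $\varphi(n)\to\infty$, the error terms $o(1/\varphi(n))$ are annihilated after multiplication by $\varphi(n)$, which is immediate from the definition of little-$o$. No new positivity, uniformity, or differentiability hypothesis beyond what is already in Theorem \ref{beta_sikkema_th} is needed.
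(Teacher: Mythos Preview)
Your argument is correct and is exactly the kind of immediate rearrangement the paper has in mind: the corollary is stated without proof, and your derivation---peeling off the $r=q$ term for (i), then the $r=0$ term for (ii) and passing to the limit in each remaining summand---is the natural way to read it off Theorem~\ref{beta_sikkema_th}. Your remark that the denominator $q!$ in the displayed conclusion of (ii) should be $r!$ is also right; the paper itself uses the formula with $r!$ when it applies the corollary immediately afterwards.
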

As a consequence of Corollary \ref{beta_sikkema_cor} (ii) we have the following Voronovskaya-type relation.
\begin{corollary}
Let $f\in C^2[0,1]$. Then
\begin{equation}
\lim\limits_{n\rightarrow\infty}n\left\{\mathcal{B}_n^{\alpha,\beta}(f;x)-f(x)\right\}=
\dfrac{x(1-x)}{2}f''(x)+[\alpha+1-(\alpha+\beta+2)x]f'(x),
\end{equation}
uniformly on $[0,1]$.
\end{corollary}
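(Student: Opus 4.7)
The plan is to deduce this Voronovskaya-type identity directly from Corollary \ref{beta_sikkema_cor}(ii) applied with $L_n=\mathcal{B}_n^{\alpha,\beta}$, $\varphi(n)=n$, and $q=2$, using the moment data already collected earlier in the paper.

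First, I would verify the three hypotheses of Theorem \ref{beta_sikkema_th} together with the reinforced hypothesis of Corollary \ref{beta_sikkema_cor}(ii). Condition (i) is trivial since $\varphi(n)=n\to\infty$. For the central-moment asymptotics with $r=1,2$, I read them off directly: from (\ref{beta_eq1.0}),
\[
nT_{n,1}^{\alpha,\beta}(x)=\frac{n\bigl(\alpha+1-(\alpha+\beta+2)x\bigr)}{n+\alpha+\beta+2}\longrightarrow c_1(x):=\alpha+1-(\alpha+\beta+2)x,
\]
and from Theorem \ref{beta_lm3.1} with $l=1$,
\[
nT_{n,2}^{\alpha,\beta}(x)\longrightarrow c_2(x):=x(1-x),
\]
both uniformly on $[0,1]$, with a remainder of order $O(1/n)=o(1/n)$ after dividing by $n$. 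For the auxiliary higher-order condition (iii), I pick the even number $m=4>q=2$ and invoke Theorem \ref{beta_lm3.1} with $l=2$, which gives $n^{2}T_{n,4}^{\alpha,\beta}(x)\to 3X^2$ and hence $T_{n,4}^{\alpha,\beta}(x)=O(n^{-2})=o(n^{-1})$.

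Next, since $\mathcal{B}_n^{\alpha,\beta}(e_0;x)=T_{n,0}^{\alpha,\beta}(x)=1$, the left-hand side of the conclusion of Corollary \ref{beta_sikkema_cor}(ii) is exactly $n\{\mathcal{B}_n^{\alpha,\beta}(f;x)-f(x)\}$; and reading the right-hand side with the correct factorial denominators $r!$ (so that the $r=2$ term yields $c_2(x)f''(x)/2$) produces
\[
c_1(x)f'(x)+\tfrac{1}{2}c_2(x)f''(x)=\bigl[\alpha+1-(\alpha+\beta+2)x\bigr]f'(x)+\tfrac{x(1-x)}{2}f''(x),
\]
which is the claimed pointwise limit.

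Finally, the uniformity assertion needs a brief additional argument, since Sikkema's theorem is stated pointwise. The cleanest route is to redo the step directly with Taylor's formula: write $f(t)=f(x)+f'(x)(t-x)+\tfrac{1}{2}f''(x)(t-x)^2+(t-x)^{2}h(t,x)$, where $h$ is bounded on $[0,1]^{2}$ and satisfies $h(t,x)\to 0$ as $t\to x$ uniformly in $x$ (using uniform continuity of $f''$). Applying $\mathcal{B}_n^{\alpha,\beta}$ and multiplying by $n$, the first three terms give the desired expression via the moment asymptotics above (which are uniform by Theorem \ref{beta_lm3.1}), while the remainder is controlled by $\|h\|_\infty\cdot nT_{n,2}^{\alpha,\beta}(x)$ on the set $|t-x|\geq\delta$ (handled via $T_{n,4}^{\alpha,\beta}=o(1/n)$ and a Cauchy--Schwarz or Chebyshev-type estimate) and by $\varepsilon\cdot nT_{n,2}^{\alpha,\beta}(x)$ on $|t-x|<\delta$. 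The only mildly subtle point, and the one I would be most careful about, is this uniformity of the remainder; all the rest is pure bookkeeping on top of Theorem \ref{beta_lm3.1} and Corollary \ref{beta_sikkema_cor}.
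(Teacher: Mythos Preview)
Your proposal is correct and follows essentially the same route as the paper: apply Corollary~\ref{beta_sikkema_cor}(ii) with $\varphi(n)=n$, $q=2$, and read off $c_1,c_2$ from Theorem~\ref{beta_lm3.1} with $l=1$. You are in fact more thorough than the paper's own proof, which neither explicitly checks condition~(iii) (your choice $m=4$ via Theorem~\ref{beta_lm3.1} with $l=2$ is exactly right) nor supplies the separate uniformity argument you sketch.
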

\begin{proof}
For $\varphi(n)=n$ and $q=2$ as given in Corollary \ref{beta_sikkema_cor} (ii),
$$
\lim\limits_{n\rightarrow\infty}n\left\{\mathcal{B}_n^{\alpha,\beta}(f;x)-f(x)\right\}=\sum\limits_{r=1}^2c_{r}(x)\dfrac{f^{(r)}(x)}{r!}=c_{1}(x)\dfrac{f'(x)}{1!}+c_{2}(x)\dfrac{f''(x)}{2!}
$$
where
$c_{r}(x)=\lim\limits_{n\rightarrow\infty}nT_{n,r}^{\alpha,\beta}(x).$
By using Lemma \ref{beta_lm3.1} with $l=1$ we get
$$
\begin{array}{rcl}
c_{1}(x)&=&\alpha+1-(\alpha+\beta+2)x\\
c_{2}(x)&=&X,
\end{array}
$$
and this concludes the proof.
\end{proof}
\begin{remark}
As a consequence of Lemma \ref{beta_lm3.1} and Corollary \ref{beta_sikkema_cor} (i) we deduce similarly that for $f\in C^{2l}[0,1]$,
\begin{equation}
\lim\limits_{n\rightarrow\infty}n^l\left\{\mathcal{B}_n^{\alpha,\beta}(f(t);x)-\sum\limits_{k=0}^{2l-1}\dfrac{f^{(k)}(x)}{k!}T_{n,k}^{\alpha,\beta}(x)\right\}=\dfrac{(2l-1)!!}{(2l)!}X^lf^{(2l)}(x), l\geq 1.
\end{equation}
From this we get also
\begin{eqnarray}\label{beta_eq1.48}
\lim\limits_{n\rightarrow\infty}n^l\left\{\mathcal{B}_n^{\alpha,\beta}(f(t);x)-\sum\limits_{k=0}^{2l-2}\dfrac{f^{(k)}(x)}{k!}T_{n,k}^{\alpha,\beta}(x)\right\}=\dfrac{(2l-1)!!}{(2l)!}X^lf^{(2l)}(x)+\nonumber\\+\dfrac{X^{l-1}}{(2l-1)!}\left[(l-1)!2^{l-1}X'\sum\limits_{k=1}^{l-1}\dfrac{(2k-1)!!}{(2k-2)!!}+\right. \;\;\;\;\;\;\;\;\;\;\;\;\;\;\;\;\;\;\;\;\;\;\;\;\;\;\;\;\;\;\;\;\;\;\;\;\;\;\\+(2l-1)!!(\alpha+1-(\alpha+\beta+2)x)\bigg]f^{(2l-1)}(x). \;\;\;\;\;\;\;\;\;\;\;\;\;\;\;\;\;\;\;\;\;\;\;\;\;\;\;\;\nonumber
\end{eqnarray}
\end{remark}
\begin{remark}
In order to compare the above with a special previous result for the case $\alpha=\beta=-1$ we manipulate the left hand side of (\ref{beta_eq1.48}) for $l=2$ by writing
$$
\begin{array}{l}
\lim\limits_{n\rightarrow\infty}n[n(\mathcal{B}_n^{\alpha,\beta}(f(t);x)-f(x))-(\alpha+1-(\alpha+\beta+2)x)f'(x)-\dfrac{X}{2}f''(x)]\\ = \lim\limits_{n\rightarrow\infty}n^2\left[(\mathcal{B}_n^{\alpha,\beta}(f(t);x)-f(x)-
T_{n,1}^{\alpha,\beta}(x)f'(x)-T_{n,2}^{\alpha,\beta}(x)\dfrac{f''(x)}{2}\right]+\\
+\lim\limits_{n\rightarrow\infty}n[nT_{n,1}^{\alpha,\beta}(x)-(\alpha+1-(\alpha+\beta+2)x)]f'(x)+\\
+\dfrac{1}{2}\lim\limits_{n\rightarrow\infty}n[nT_{n,2}^{\alpha,\beta}(x)-X]f''(x).
\end{array}
$$
By using (\ref{beta_eq1.0}),  (\ref{beta_eq2.1}) and  (\ref{beta_eq1.48}) with $l=2$, we get
$$
\begin{array}{l}
\lim\limits_{n\rightarrow\infty}n[n(\mathcal{B}_n^{\alpha,\beta}(f(t);x)-f(x))-\dfrac{X}{2}f''(x)-(\alpha+1-(\alpha+\beta+2)x)f'(x)]\\
=\dfrac{1}{8}X^2f^{IV}(x)+\dfrac{1}{6}X(3\alpha+5-(3\alpha+3\beta+10)x)f'''(x)-\\-(\alpha+\beta+2)(\alpha+1-(\alpha+\beta+2)x)f'(x)+\dfrac{1}{2}f''(x)[(\alpha+1)(\alpha+2)-\\-(2\alpha^2+2\alpha\beta+10\alpha+4\beta+11)x+x^2((\alpha+\beta)(\alpha+\beta+7)+11)].
\end{array}
$$
For $\alpha=\beta=-1$, this reduces to
$$
\begin{array}{l}
\lim\limits_{n\rightarrow\infty}n[n(\mathcal{B}_n^{-1,-1}(f;x)-f(x))-\dfrac{X}{2}f''(x)]=\\=\dfrac{1}{24}\left(3X^2f^{IV}(x)+8X(1-2x)f'''(x)-12Xf''(x)\right).
\end{array}
$$
This result can be also deduced from \cite[Remark 3]{Abel-Gupta-Mohapatra:2005}.
\end{remark}
\section{Iterates of $\mathcal{B}_n^{\alpha,\beta}$}
1. $\alpha=\beta=-1$. In this case $\mathcal{B}_n^{-1,-1}$ are positive linear operators preserving  linear functions, and $\mathcal{B}_n^{-1,-1}e_2(x)=\dfrac{nx(nx+1)}{n(n+1)}>e_2(x)$, for $0<x<1$. Consequently
$$
\lim\limits_{m\rightarrow\infty}\left(\mathcal{B}_n^{-1,-1}\right)^mf(x)=(1-x)f(0)+xf(1), f\in C[0,1],
$$
uniformly on $[0,1]$ ( see \cite{Rasa:2010}).\vspace{0.2cm}\\
2. $\alpha>-1, \beta=-1$. Then $\mathcal{B}_n^{\alpha,-1}$ are positive linear operators preserving constant functions, $\mathcal{B}_n^{\alpha,-1}f(1)=f(1)$ for all $f\in C[0,1]$, and
$$
\mathcal{B}_n^{\alpha,-1}e_2(x)=\dfrac{(nx+\alpha+1)(nx+\alpha+2)}{(n+\alpha+1)(n+\alpha+2)}>e_2(x), 0\leq x< 1.
$$
Therefore
$$
\lim\limits_{m\rightarrow\infty}\left(\mathcal{B}_n^{\alpha,-1}\right)^mf(x)=f(1), f\in C[0,1],
$$
uniformly on $[0,1]$ (see \cite{Rasa:2010}).\vspace{0.2cm}\\
3. $\alpha=-1, \beta>-1$. As in the previous case, one proves that
$$
\lim\limits_{m\rightarrow\infty}\left(\mathcal{B}_n^{-1,\beta}\right)^mf(x)=f(0), f\in C[0,1].
$$
4. $\alpha>-1, \beta>-1$. In this case we have for all $k\geq 0$,
$$
\mathcal{B}_n^{\alpha,\beta}e_k(x)=\dfrac{(nx+\alpha+1)^{\overline{k}}}{(n+\alpha+\beta+2)^{\overline{k}}}, x\in [0,1].
$$
From this we get
\begin{equation}\label{beta_eq3.1}
\mathcal{B}_n^{\alpha,\beta}e_k(x)=\dfrac{1}{(n+\alpha+\beta+2)^{\overline{k}}}\sum\limits_{j=0}^{k}s_{k-j}(k,\alpha)n^jx^j,
\end{equation}
where $s_{k-j}(k,\alpha)$ are elementary symmetric sums of the numbers $\alpha+1, \alpha+2,..., \alpha+k$; in particular
$s_{0}(k,\alpha)=1$ and
\begin{equation}\label{beta_eq3.2}
s_{1}(k,\alpha)=(\alpha+1)+...+(\alpha+k)=k\alpha+\dfrac{k(k+1)}{2}.
\end{equation}
It follows that the numbers
\begin{equation*}
\lambda_{n,k}:=\dfrac{n^k}{(n+\alpha+\beta+2)^{\overline{k}}}, k\geq 0,
\end{equation*}
are eigenvalues of $\mathcal{B}_n^{\alpha,\beta}$, and to each of them there corresponds a monic eigenpolynomial $p_{n,k}$ with $deg\; p_{n,k}=k$. Let $p\in \Pi$ and $d=deg\; p$. Then $p$ has a decomposition
$$
p=a_{n,0}(p)p_{n,0}+a_{n,1}(p)p_{n,1}+...+a_{n,d}(p)p_{n,d}
$$
with some coefficients $a_{n,k}(p)\in \mathbb{R}$. Since $\lambda_{n,0}=1$ and $p_{n,0}=e_0$ we get
$$
(\mathcal{B}_n^{\alpha,\beta})^mp=a_{n,0}(p)e_0+\sum\limits_{k=1}^d a_{n,k}(p)\lambda_{n,k}^m p_{n,k}, \; m\geq 1
$$
and so
\begin{equation}\label{beta_eq3.3}
\lim\limits_{m\rightarrow\infty}(\mathcal{B}_n^{\alpha,\beta})^mp=a_{n,0}(p)e_0, \;p\in \Pi.
\end{equation}
Consider the linear functional $\mu_n:\Pi\rightarrow\mathbb{R}, \mu_n(p)=a_{n,0}(p)$, and the linear operator $P_n:\Pi\rightarrow\Pi$,
\begin{equation*}
P_np=\mu_n(p)e_0, \;p\in \Pi.
\end{equation*}
Then (\ref{beta_eq3.3}) becomes
\begin{equation}\label{beta_eq3.4}
\lim\limits_{m\rightarrow\infty}(\mathcal{B}_n^{\alpha,\beta})^mp=P_np, \;p\in \Pi.
\end{equation}
Obviously $P_n$ is positive, and so $\mu_n$ is positive; moreover, $||\mu_n||=1$ because $\mu_n(e_0)=1$. By the Hahn-Banach theorem, $\mu_n$ can be extended to a norm-one linear functional on $C[0,1]$. Since $\Pi$ is dense in $C[0,1]$, the extension is unique and the extended functional $\mu_n:C[0,1]\rightarrow\mathbb{R}$ is also positive. Now $P_n$ can be extended from $\Pi$ to $C[0,1]$ by setting $P_n:C[0,1]\rightarrow\Pi, P_nf=\mu_n(f)e_0, f\in C[0,1]$.
Remark that
\begin{equation}\label{beta_eq3.5}
||(\mathcal{B}_n^{\alpha,\beta})^m||=||P_n||=1, \;m\geq 1.
\end{equation}
Using again the fact that $\Pi$ is dense in $C[0,1]$, we get from (\ref{beta_eq3.4}) and (\ref{beta_eq3.5})
\begin{equation}\label{beta_eq3.6}
\lim\limits_{m\rightarrow\infty}(\mathcal{B}_n^{\alpha,\beta})^mf=P_nf, \;f\in C[0,1].
\end{equation}
On the other hand, from (\ref{beta_eq3.1}) we deduce the following recurrence formula for the computation of $P_n e_k, k\geq 1$:
\begin{equation*}
\left((n+\alpha+\beta+2)^{\overline{k}}-n^k\right)P_ne_k=\sum\limits_{j=0}^{k-1}s_{k-j}(k,\alpha)n^jP_ne_j.
\end{equation*}
Since $P_ne_k=\mu_n(e_k)e_0$, we get for $n\geq 1$ and $k\geq 1$
\begin{equation}\label{beta_eq3.7}
\mu_n(e_k)=\sum\limits_{j=0}^{k-1}s_{k-j}(k,\alpha)\dfrac{n^j}{(n+\alpha+\beta+2)^{\overline{k}}-n^k}\mu_n(e_j).
\end{equation}
Using (\ref{beta_eq3.7}) it is easy to prove by induction on $k$ that there exists
\begin{equation}\label{beta_eq3.8}
\mu(e_k):=\lim\limits_{n\rightarrow\infty}\mu_n(e_k), k\geq 0,
\end{equation}
and, moreover,
$$
\mu(e_k)=\dfrac{s_1(k,\alpha)}{(\alpha+\beta+2)+...+(\alpha+\beta+k+1)}\mu(e_{k-1}),
$$
i.e., taking (\ref{beta_eq3.2}) into account,
$$
\mu(e_k)=\dfrac{2\alpha+k+1}{2\alpha+2\beta+k+3}\mu(e_{k-1}),\; k\geq 1.
$$
Since $\mu(e_0)=1$, it follows that
$$
\mu(e_k)=\dfrac{(2\alpha+2)^{\overline{k}}}{(2\alpha+2\beta+4)^{\overline{k}}},\; k\geq 0.
$$
This can be rewritten as
$$
\mu(e_k)=\dfrac{B(2\alpha+k+2, 2\beta+2)}{B(2\alpha+2, 2\beta+2)}=\dfrac{\int\limits_0^1t^{2\alpha+1}(1-t)^{2\beta+1}e_k(t)dt}{\int\limits_0^1t^{2\alpha+1}(1-t)^{2\beta+1}dt},
$$
so that
$$
\mu(p)=\dfrac{\int\limits_0^1t^{2\alpha+1}(1-t)^{2\beta+1}p(t)dt}{\int\limits_0^1t^{2\alpha+1}(1-t)^{2\beta+1}dt}, p\in\Pi.
$$
Consider the extension of $\mu$ to $C[0,1]$, i.e.,
$$
\mu(f)=\dfrac{\int\limits_0^1t^{2\alpha+1}(1-t)^{2\beta+1}f(t)dt}{\int\limits_0^1t^{2\alpha+1}(1-t)^{2\beta+1}dt}, f\in C[0,1],
$$
and the positive linear operator $P:C[0,1]\rightarrow\Pi, Pf=\mu(f)e_0, f\in C[0,1]$. Acording to (\ref{beta_eq3.8}), $\lim\limits_{n\rightarrow\infty}\mu_n(p)=\mu(p), p\in\Pi$, i.e.,
\begin{equation}\label{beta_eq3.9}
\lim\limits_{n\rightarrow\infty}P_np=Pp, p\in\Pi.
\end{equation}
Since $||P_n||=||P||=1, n\geq 1$, we conclude from (\ref{beta_eq3.9}) that $\lim\limits_{n\rightarrow\infty}P_nf=Pf, f\in C[0,1]$. Thus, for the operators $P_n$ described in (\ref{beta_eq3.6}) we have proved:
\begin{theorem}
Let $\alpha>-1, \beta>-1$. Then for each $f\in C[0,1]$ and $n\geq 1$,
\begin{equation*}
\lim\limits_{n\rightarrow\infty}P_nf=\dfrac{\int\limits_0^1t^{2\alpha+1}(1-t)^{2\beta+1}f(t)dt}{\int\limits_0^1t^{2\alpha+1}(1-t)^{2\beta+1}dt}e_0.
\end{equation*}
\end{theorem}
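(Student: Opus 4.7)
The plan is to reduce the theorem to a statement about the sequence of linear functionals $\mu_n$ on $C[0,1]$ defined earlier. Since $P_n f = \mu_n(f)\,e_0$, and since the right-hand side of the conclusion has the form $\mu(f)\,e_0$ where
\[
\mu(f) := \frac{\int_0^1 t^{2\alpha+1}(1-t)^{2\beta+1} f(t)\,dt}{\int_0^1 t^{2\alpha+1}(1-t)^{2\beta+1}\,dt},
\]
it suffices to prove that $\mu_n(f) \to \mu(f)$ for every $f \in C[0,1]$. I would split the proof into a polynomial step and a density step.

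For the polynomial step, I would first show by induction on $k$ that $\mu_n(e_k)$ converges as $n \to \infty$ and identify the limit in closed form. The recursion~(\ref{beta_eq3.7}) expresses $\mu_n(e_k)$ as a rational function in $n$ of the previous $\mu_n(e_j)$; extracting leading orders via $(n+\alpha+\beta+2)^{\overline{k}} - n^k = s_1(k,\alpha+\beta+2)\,n^{k-1} + O(n^{k-2})$ makes the dominant contribution come from $j = k-1$, whose prefactor tends to $s_1(k,\alpha)/s_1(k,\alpha+\beta+2)$. Using~(\ref{beta_eq3.2}), this ratio simplifies to $(2\alpha+k+1)/(2\alpha+2\beta+k+3)$, so telescoping and $\mu(e_0) = 1$ yield
\[
\mu(e_k) = \frac{(2\alpha+2)^{\overline{k}}}{(2\alpha+2\beta+4)^{\overline{k}}}, \qquad k \geq 0.
\]
Rewriting the rising factorials as a ratio of Beta functions and inserting the integral representation of the Beta function identifies this limit with $\mu(e_k)$ in the sense above. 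By linearity, $\mu_n(p) \to \mu(p)$ for every $p \in \Pi$.

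For the extension from $\Pi$ to $C[0,1]$, the key observation is that both $\mu_n$ and $\mu$ are positive linear functionals on $C[0,1]$ with $\mu_n(e_0) = \mu(e_0) = 1$, hence both have operator norm one. Given $f \in C[0,1]$ and $\varepsilon > 0$, the Weierstrass approximation theorem furnishes $p \in \Pi$ with $\|f - p\|_\infty < \varepsilon$, and then
\[
|\mu_n(f) - \mu(f)| \leq |\mu_n(f-p)| + |\mu_n(p) - \mu(p)| + |\mu(p-f)| \leq 2\varepsilon + |\mu_n(p) - \mu(p)|.
\]
Letting $n \to \infty$ and then $\varepsilon \to 0$ gives $\mu_n(f) \to \mu(f)$, hence $P_n f \to \mu(f) e_0$ uniformly on $[0,1]$.

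The only nontrivial obstacle is the induction computing $\lim_n \mu_n(e_k)$: one must verify that, in the recursion~(\ref{beta_eq3.7}), all the terms with $j < k-1$ contribute $o(1)$ as $n \to \infty$, while the $j = k-1$ term produces the asserted clean recurrence. This is a careful but purely algebraic accounting of leading orders in $n$, and once it is in hand the rest of the proof is standard functional-analytic boilerplate.
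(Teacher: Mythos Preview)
Your proposal is correct and follows essentially the same route as the paper: induction on $k$ via the recursion~(\ref{beta_eq3.7}) to show $\mu_n(e_k)\to\mu(e_k)=(2\alpha+2)^{\overline{k}}/(2\alpha+2\beta+4)^{\overline{k}}$, identification with the Beta integral, and then extension from $\Pi$ to $C[0,1]$ using $\|\mu_n\|=\|\mu\|=1$ together with Weierstrass density. The only quibble is a harmless indexing slip---in the paper's convention the leading coefficient of $(n+\alpha+\beta+2)^{\overline{k}}-n^k$ is $s_1(k,\alpha+\beta+1)$ rather than $s_1(k,\alpha+\beta+2)$---but your stated limiting ratio $(2\alpha+k+1)/(2\alpha+2\beta+k+3)$ is correct.
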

For $\alpha=\beta=0$, this result was obtained, with different methods, in \cite{Attalienti-Rasa:2008}.

\bigskip
 \noindent
$\begin{array}{ll}
\textrm{Heiner Gonska}\\
 \textrm{University of Duisburg-Essen} \\
 \textrm{Faculty for Mathematics} \\
 \textrm{47048 Duisburg, Germany} \\
\mathtt{heiner.gonska@uni-due.de} 
\end{array} \qquad \qquad $
 $\begin{array}{ll}
\textrm{Ioan Ra\c sa}\\
 \textrm{Technical University of Cluj-Napoca}\\
 \textrm{Department of Mathematics} \\
 \textrm{400114 Cluj-Napoca, Romania} \\
\mathtt{Ioan.Rasa@math.utcluj.ro} 
\end{array} \qquad $
\; \vspace{0.1cm}
$\begin{array}{l}
 \vspace{0.5cm}\\
\textrm{Elena-Dorina St\u anil\u a}\\
 \textrm{University of Duisburg-Essen} \\
 \textrm{Faculty for Mathematics} \\
 \textrm{47048 Duisburg, Germany} \\
\mathtt{elena.stanila@stud.uni-due.de} 
\end{array} $


\begin{thebibliography}{99}

\bibitem{Abel-Gupta-Mohapatra:2005} U. Abel, V. Gupta, R.N. Mohapatra: Local approximation by Beta operators, {\it Nonlinear Analysis} {\bf 62} (2005), 41-52.
\bibitem{Attalienti-Rasa:2008} A. Attalienti, I. Ra\c sa: Overiterated linear operators and asymptotic
behavior of semigroups, {\it Mediterr. J. Math.} {\bf 5} (2008), 315-324.
\bibitem{Lupas:1972} A. Lupa\c{s}: {\it Die Folge der Betaoperatoren}, Ph.D Thesis, Stuttgart: Universit\"at Stuttgart, 1972.
\bibitem{Mamedov:1962} R.G. Mamedov: The asymptotic value of the approximation of multiply differentiable functions by positive linear operators. (Russian) {\it Dokl. Akad. Nauk SSSR} {\bf 146} (1962), 1013-1016.  
\bibitem{Muhlbach:1972} G. M\"uhlbach: Rekursionsformeln f\"ur die zentralen Momente der P\'olya und der Beta-Verteilung, \textit{Metrika} \textbf{19} (1972), 171-177.
 \bibitem{Rasa:2010} I. Ra\c sa: $C_0$ - semigroups and iterates of positive linear operators:
asymptotic behaviour, {\it Rendiconti del Circolo Matematico di Palermo}, Ser.
II, Suppl. {\bf 82} (2010), 123-142.
\bibitem{Sikkema:1975} P.C. Sikkema: \"Uber die Schurerschen linearen positiven Operatoren. I. (German) {\it Nederl. Akad. Wetensch. Proc. Ser. A 78 = Indag. Math.} {\bf 37} (1975), 230-242.
%
\end{thebibliography}
\end{document}